\documentclass{amsart}
\usepackage{amsmath,amssymb,amsthm,multirow,hyperref,enumitem,tikz}
\usetikzlibrary{decorations.pathmorphing,decorations.pathreplacing,calc}
\newtheorem{lemma}{Lemma}[section]
\newtheorem{corollary}[lemma]{Corollary}

\newtheorem{theorem}[lemma]{Theorem}

\begin{document}

\title[Base size]{On the base size of a finite group on its action on the lattice of subgroups}  

\author[J-L.~Du]{Jia-Li Du}
\address{School of Mathematical Sciences, Nanjing Normal University, Nanjing, 210023, P.R. China, Ministry of Education Key Laboratory of NSLSCS, Nanjing, 210023, P.R. China}
\email{dujl@njnu.edu.cn}

\author[J.~Morris]{Joy Morris}
\address{Department of Mathematics and Computer Science\\
University of Lethbridge\\
Lethbridge, AB. T1K 3M4}    
\email{joy.morris@uleth.ca}
\thanks{The first author was supported by the National Natural Science Foundation of China (12571370, 12371025) and National Key R\&D Program of China (211070B62501). The second author was supported by the Natural Science and Engineering Research Council of Canada (grant RGPIN-2024-04013).}

\author[P.~Spiga]{Pablo Spiga}
\address{Dipartimento di Matematica e Applicazioni, University of Milano-Bicocca, Via Cozzi 55, 20125 Milano, Italy} 
\email{pablo.spiga@unimib.it}

\begin{abstract}
Given a finite group $R$, we investigate the base size of the action of the automorphism group of $R$ on the lattice of subgroups of $R$. Our main result shows that this base size is $1$ if and only if $R$ is cyclic. Our motivation arises from a conjecture of Babai on the problem of representing groups as automorphism groups of lattices with a bounded number of orbits.

\smallskip
\end{abstract}

\subjclass[2010]{Primary 05B25; Secondary 20F16}

\keywords{automorphism group, base size, Baer norm, power automorphism, subgroup lattice} 

\maketitle
\section{Introduction}\label{sec:intro}
Let $R$ be a finite group, let $\mathrm{Aut}(R)$ denote the automorphism group of $R$ and let $\mathrm{Sub}(R)$ be the collection of all subgroups of $R$. The group $\mathrm{Aut}(R)$ has a natural action on $\mathrm{Sub}(R)$, where given $H\in\mathrm{Sub}(R)$ and $\alpha\in \mathrm{Aut}(R)$, $H^\alpha$ is the image of $H$ under $\alpha$. In this paper, we are interested in the permutation group induced by the action of $\mathrm{Aut}(R)$ on $\mathrm{Sub}(R)$, which, for lack of a better name, we denote by $\mathrm{A}(R)$.

We recall that $\alpha\in\mathrm{Aut}(R)$ is said to be a \textit{\textbf{power automorphism}} if, for every $x\in R$, there exists $n_x\in\mathbb{Z}$ such that $x^\alpha=x^{n_x}$. The set of power automorphisms of $R$ forms a subgroup $\mathrm{PAut}(R)$ of $\mathrm{Aut}(R)$.

In general, the action of $\mathrm{Aut}(R)$ on $\mathrm{Sub}(R)$ is not faithful. Clearly, every power automorphism fixes setwise every subgroup of $R$, and hence $\mathrm{PAut}(R)$ is contained in the kernel of the action of $\mathrm{Aut}(R)$ on $\mathrm{Sub}(R)$. Conversely, if $\alpha\in \mathrm{Aut}(R)$ acts trivially on the set $\mathrm{Sub}(R)$, then $\alpha$ fixes setwise each cyclic subgroup $\langle x\rangle$ of $R$, and hence $x^\alpha\in\langle x\rangle$ for every $x\in R$, that is, $\alpha$ is a power automorphism. Summing up, $\mathrm{PAut}(R)$ is the kernel of the action of $\mathrm{Aut}(R)$ on $\mathrm{Sub}(R)$ and hence 
$$\mathrm{A}(R)\cong \frac{\mathrm{Aut}(R)}{\mathrm{PAut}(R)}.$$

We recall that a \textit{\textbf{base}} for a permutation group $G$ is a sequence $\omega_1 , \ldots, \omega_\ell$ of elements in the domain of $G$ whose pointwise stabilizer in $G$ is the identity. The minimal cardinality of a base is called the \textit{\textbf{base size}}.

We are interested in the base size of the action of $\mathrm{Aut}(R)$ on $\mathrm{Sub}(R)$; that is, in the base size of $\mathrm{A}(R)$.  We determine the finite groups $R$ such that this base size is $1$. When $R$ is cyclic, $\mathrm{A}(R)=1$ because $R$ itself is a base for this action, since every automorphism of $R$ is a power automorphism. Hence, $\mathrm{A}(R)$ has base size $1$. Our main result shows the converse.

\begin{theorem}\label{thrm:main}
Let $R$ be a finite group. The base size of the action of $\mathrm{Aut}(R)$ on $\mathrm{Sub}(R)$ is $1$ if and only if $R$ is cyclic.
\end{theorem}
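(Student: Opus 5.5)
We argue the contrapositive. Assume $R$ is not cyclic. We must show that for every $H\in\mathrm{Sub}(R)$ the setwise stabiliser $\mathrm{N}_{\mathrm{Aut}(R)}(H)$ is not contained in $\mathrm{PAut}(R)$. Equivalently, we must produce an automorphism fixing $H$ that is not a power automorphism. There are two natural sources of such automorphisms: inner automorphisms coming from $\mathrm{N}_R(H)$, and automorphisms of "transvection type" $x\mapsto x\,\varphi(x)$ in the abelian situation. The proof splits according to which source is available.

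\emph{Step 1 (inner automorphisms).} Conjugation by $g\in R$ is a power automorphism exactly when $g$ normalises every subgroup of $R$. That is, $g$ lies in the Baer norm $\mathrm{N}(R)=\bigcap_{K\le R}\mathrm{N}_R(K)$. Hence if $\mathrm{N}_R(H)\not\le \mathrm{N}(R)$, any $g\in \mathrm{N}_R(H)\setminus \mathrm{N}(R)$ already works. We may therefore assume $H\le \mathrm{N}_R(H)\le \mathrm{N}(R)$. By Schenkman's theorem $\mathrm{N}(R)\le \mathrm{Z}_2(R)$, so $\mathrm{N}(R)$ is nilpotent. Two cases arise:
\begin{itemize}
\item[(a)] $\mathrm{N}(R)=R$, so $R$ is a Dedekind group. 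Then $R$ is abelian or $Q_8\times E\times A$ with $E$ elementary abelian $2$-group and $A$ abelian of odd order.
\item[(b)] $\mathrm{N}(R)<R$, and $H$ is self-normalising relative to $R$ outside $\mathrm{N}(R)$.
\end{itemize}

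\emph{Step 2 (the abelian case).} Write $R=\prod_p R_p$ as a product of its Sylow subgroups; then $H=\prod_p H_p$ and $\mathrm{Aut}(R)=\prod_p\mathrm{Aut}(R_p)$. Choose $p$ with $R_p$ noncyclic. It suffices to find $\alpha\in\mathrm{Aut}(R_p)$ fixing $H_p$ that is not a power automorphism; recall that for abelian $p$-groups the power automorphisms are exactly $x\mapsto x^k$. If $H_p=R_p$, take any non-scalar automorphism, which exists since $R_p$ is noncyclic. Otherwise, pick a maximal subgroup $M\ge H_p$ and the quotient map $\varphi\colon R_p\to R_p/M\cong C_p$. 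Choose $z\in\Omega_1(R_p)\cap M$ of order $p$, which exists because $\Omega_1(R_p)$ has rank $\ge2$. Define $\alpha(x)=x\,z^{\varphi(x)}$. This is an automorphism fixing $M\supseteq H_p$ pointwise. We then check that $z$ can be chosen so that $z\notin\langle y\rangle$ for some $y\notin M$; this ensures $\alpha$ is not scalar. The Hamiltonian case is handled the same way. The abelian factor is treated as above when it is noncyclic. When it is cyclic, we use explicitly an automorphism of $Q_8\times E$ (e.g.\ an element of order $3$ of $\mathrm{Aut}(Q_8)$, or a transvection $Q_8\to Z(Q_8)$) normalising the $2$-part of $H$. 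The required checks are finite, since subgroups of $Q_8$ fall into few $\mathrm{Aut}(Q_8)$-classes.

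\emph{Step 3 (main obstacle: case (b)).} Here $H\le\mathrm{N}(R)<R$ and $\mathrm{N}_R(H)\le\mathrm{N}(R)$. Inner automorphisms inside $\mathrm{N}(R)$ are useless, since they are power automorphisms. So we must construct an outer-type automorphism fixing $H$. My first attempt is to exploit central automorphisms: since $\mathrm{N}(R)\le \mathrm{Z}_2(R)$, there are homomorphisms $R\to \mathrm{Z}(R)$ factoring through $R/R'$. We look for one that kills $H$ (automatic if $H\le R'$ or $H$ lies in a suitable kernel) and yields $x\mapsto x\,\theta(x)$ not a power map. Failing that, we aim to show that this configuration forces $R$ to be cyclic, contradicting our assumption. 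The idea is that every subgroup normalised only by norm elements should force $R/\mathrm{N}(R)$ and $\mathrm{N}(R)$ to be very restricted. I expect this step to require the bulk of the work, possibly a minimal-counterexample argument reducing to $p$-groups and to groups with a normal cyclic Sylow subgroup.
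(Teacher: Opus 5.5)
Your Steps 1 and 2 are sound in outline. Step 1 is the paper's reduction to a subgroup $H$ with $\mathbf{N}_R(H)=N(R)$. Your transvection $x\mapsto x\,z^{\varphi(x)}$ is a valid alternative to the paper's $a_i\mapsto a_ia_j^{p^{e_j-1}}$ for noncyclic abelian groups. (The Hamiltonian case is only gestured at. The $2$-part $Q_8\times E$ is non-abelian, so you must first move $H$ by an automorphism to a product $H_1\times H_2\times H_3$; then the single automorphism $i\mapsto i$, $j\mapsto k$, $k\mapsto -j$ of $Q_8$ works, as in the paper.)

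The genuine gap is Step 3. Case (b) is the entire content of the theorem, and you offer a plan rather than an argument. The plan also misses the key structural fact. You use Schenkman's theorem only to say that $N(R)$ is nilpotent. Combining both of its halves with $\mathbf{N}_R(H)=N(R)$ gives
\[
R'\le \mathbf{C}_R(N(R))=\mathbf{C}_R(\mathbf{N}_R(H))\le \mathbf{C}_R(H)\le \mathbf{N}_R(H)=N(R)\le \mathbf{Z}_2(R),
\]
so $R$ itself is nilpotent of class at most $3$. Since $\mathrm{Aut}$ and $\mathrm{PAut}$ split over Sylow subgroups, you reduce at once to a $p$-group. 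No minimal counterexample, and no analysis of groups with a normal cyclic Sylow subgroup, is needed.

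For $p$-groups, the central automorphisms $x\mapsto x\chi(x)$ with $\chi\colon R\to A\le\mathbf{Z}(R)$ and $AH\le\ker\chi$ are only half of the toolkit. They show that $\mathbf{Z}(R)$ is cyclic: two distinct central subgroups $A_1,A_2$ of order $p$ would both have to lie in the cyclic group $\langle x\rangle$ for every $x\notin K$. They also give the final contradiction.

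The intermediate steps, however, need a non-central automorphism. Suppose $y\in\mathbf{Z}_2(R)\setminus\mathbf{Z}(R)$ with $|[R,y]|=p$, and $y^p=1$ for odd $p$ (resp.\ $y^2$ is the central involution for $p=2$). Fix $t\notin\mathbf{C}_R(y)$. Then $t^ic\mapsto (yt)^ic$ for $c\in\mathbf{C}_R(y)$, $0\le i<p$, is an automorphism fixing $\mathbf{C}_R(y)$ pointwise and sending $t$ to $yt\notin\langle t\rangle$.

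One applies this to suitable $y\in N(R)$ that centralise $H$, and notes that a cyclic $N(R)$ would make $H$ characteristic. This forces in turn that $N(R)$ is abelian and noncyclic, that $p=2$, that $|\mathbf{Z}(R)|=2$, and that $N(R)$ is elementary abelian. Then $N(R)=\mathbf{C}_R(H)$ with $|R:\mathbf{C}_R(h)|\le 2$ for $h\in H$ makes $R/N(R)$ elementary abelian. Every $t\notin N(R)$ then has order $4$ with $t^2\notin\mathbf{Z}(R)=\langle z\rangle$. Finally, a central automorphism whose kernel is a maximal subgroup $K\ge N(R)$ with $t\notin K$ sends $t\mapsto tz\notin\langle t\rangle$.

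Neither this second construction nor the chain of reductions appears in your proposal. So the implication ``base size $1\Rightarrow R$ cyclic'' remains unproved outside the Dedekind case.
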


We find this result quite unexpected, because it shows that $\mathrm{Aut}(R)$ can have base size $1$ in the action under consideration only when the action is trivial. Therefore we have the following corollary.

\begin{corollary}\label{cor:main}
Let $R$ be a finite group. In the action of $\mathrm{Aut}(R)$ on $\mathrm{Sub}(R)$ there exists a regular orbit (that is, an orbit on which the permutation group induced by $\mathrm{Aut}(R)$ acts regularly) if and only if $R$ is cyclic.
\end{corollary}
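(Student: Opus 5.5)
The corollary will follow from Theorem~\ref{thrm:main} together with the identification $\mathrm{A}(R)\cong\mathrm{Aut}(R)/\mathrm{PAut}(R)$ established above. The approach is to translate "regular orbit" into "base of size~$1$" and then invoke the theorem.

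First I record the general fact that, for a permutation group $G$ on a set $\Omega$, an element $\omega\in\Omega$ lies in a regular orbit if and only if its stabilizer $G_\omega$ is trivial. By definition, this is exactly the condition that the single-element sequence $\omega$ is a base for $G$. Hence $G$ has a regular orbit if and only if $G$ admits a base of size~$1$.

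There is one degenerate case to handle. If $G=1$, the empty sequence is already a base. In that case every orbit is trivially regular, since each point has trivial stabilizer, so every point is a base of size~$1$. I will interpret "base size $1$" in the sense the paper uses for cyclic $R$: as observed in the introduction, $R$ itself is then a base, so the trivial group counts as having base size~$1$. With this convention, "there is a base consisting of one point" and "base size $1$" agree for $G=\mathrm{A}(R)$.

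Next I apply this equivalence to $G=\mathrm{A}(R)$ acting on $\Omega=\mathrm{Sub}(R)$. The permutation group induced by $\mathrm{Aut}(R)$ on an orbit $H^{\mathrm{Aut}(R)}$ acts regularly precisely when the stabilizer of $H$ in $\mathrm{A}(R)$ is trivial. Equivalently, $N_{\mathrm{Aut}(R)}(H)=\mathrm{PAut}(R)$, where $N_{\mathrm{Aut}(R)}(H)$ is the setwise stabilizer of $H$ in $\mathrm{Aut}(R)$. This is the same as $H$ forming a base of size~$1$ for $\mathrm{A}(R)$.

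Finally, Theorem~\ref{thrm:main} says such an $H$ exists if and only if $R$ is cyclic, which gives both directions at once. For the "if" direction one can also see it directly: when $R$ is cyclic, $\mathrm{A}(R)=1$, so every orbit, for example $\{R\}$, is regular.

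There is no real obstacle here. The only point requiring care is the trivial-group convention, where "base size $1$" must be read as "some single point is a base" rather than "the minimum base size is exactly $1$". All substantive content lies in Theorem~\ref{thrm:main}.
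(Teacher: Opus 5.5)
Your proposal is correct and matches the paper's own derivation: a regular orbit of $\mathrm{A}(R)$ is exactly a subgroup $H$ with $\mathbf{N}_{\mathrm{Aut}(R)}(H)=\mathrm{PAut}(R)$, i.e.\ a one-point base, which is the starting hypothesis~\eqref{eq:2} of the proof of Theorem~\ref{thrm:main}, and your handling of $\mathrm{A}(R)=1$ matches the paper's convention that $R$ itself is then a base. One wording caution: ``regular'' must mean that $\mathrm{A}(R)$ itself acts regularly on the orbit (trivial stabilizers in $\mathrm{A}(R)$), not the quotient group induced on that orbit, since under the latter reading a fixed point such as $\{1\}$ would always give a ``regular'' orbit; your argument uses the correct notion, but your sentence about the group ``induced \dots\ on an orbit'' literally says the other thing.
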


\section{Our motivation}\label{sec:motivation}
Our motivation for this investigation arises from an apparently different question. Since the early 1900s, there has been considerable interest in determining, for a given class of combinatorial objects, whether every finite group $R$ can be realized as the automorphism group of some object in that class. Perhaps the most famous example is the theorem of Frucht~\cite{frucht}, which shows that, for every finite group $R$, there exists a graph $\Gamma$ such that $R\cong\mathrm{Aut}(\Gamma)$. The analogous result for lattices is essentially due to Birkhoff~\cite{birkhoff}: for every finite group $R$, there exists a lattice $L$ such that $R\cong\mathrm{Aut}(L)$. Similar results hold for other classes of combinatorial objects.

Once such representation results are established, it is natural to refine the constructions in order to control additional combinatorial properties of the objects involved. The aim is not only to realize a given abstract group as an automorphism group, but also to ensure that the object has a “small” or simple structure, so that the group reflects more closely the combinatorial nature of the object.

We illustrate this with two classical examples. In the case of graphs, one may ask, given a finite group $R$, to find a graph $\Gamma$ such that $R\cong\mathrm{Aut}(\Gamma)$ and such that the number of vertex-orbits of $\mathrm{Aut}(\Gamma)$ is as small as possible. (The construction of Frucht depends on the number of generators of $R$.) In this context, the notion of a \textit{\textbf{graphical regular representation}} arises: this is a graph $\Gamma$ such that $R\cong\mathrm{Aut}(\Gamma)$ and $\mathrm{Aut}(\Gamma)$ acts regularly on the vertex set.

A related line of investigation considers representations of finite groups via automorphism groups of graphs $\Gamma$ such that $\mathrm{Aut}(\Gamma)$ has a bounded number of edge-orbits. This problem was motivated by a conjecture of Babai. (Again, Frucht’s construction, via suitable modifications of Cayley graphs, produces graphs whose automorphism groups have a number of edge-orbits bounded above by the number of generators of the group.) Somewhat surprisingly, the answer to this question is negative, as shown by Goodman~\cite{Goodman}: for every positive integer $\ell$, there exists a finite group $R_\ell$ such that, if $R_\ell\cong\mathrm{Aut}(\Gamma)$ for some finite graph $\Gamma$, then $\mathrm{Aut}(\Gamma)$ has at least $\ell$ edge-orbits. The methods developed in obtaining this result (see, for instance,~\cite{Babai1}) are of independent interest.

In this line of research, a major open problem concerns the analogous question for lattices. This problem was posed by Babai in several places; see, for example,~\cite[see abstract]{5},~\cite[Problem~5.27]{4},~\cite[Conjecture~4.13]{Babai0} and~\cite[see page~8]{Babai1}. The question is the following:
\begin{center}
Is there an absolute constant $c$ such that, for every finite group $R$, there exists a lattice $L$ with $R\cong\mathrm{Aut}(L)$ and such that $\mathrm{Aut}(L)$ has at most $c$ orbits?
\end{center}
Babai conjectured that the answer is affirmative. To date, there has been almost no progress on this conjecture, and there are good reasons for this. Indeed, given a finite group $R$, Birkhoff’s construction of a lattice $L$ proceeds via a graph, that is, the lattice $L$ is built from a graph. Through this standard construction, finite groups that can be realized as automorphism groups of graphs with a bounded number of edge-orbits yield lattices whose automorphism groups have a bounded number of orbits. However, as noted above, there exist groups that require arbitrarily many edge-orbits in any such graph representation. Therefore, this natural strategy for approaching Babai’s conjecture appears to lead to a dead end.

The work presented in this paper is inspired by the comments and questions in~\cite[Section~8]{Babai1}. These problems are twofold in nature. On the one hand, they ask for the minimal number of subgroups required so that the identity is the only automorphism fixing these subgroups setwise. On the other hand, they consider the analogous question when one is allowed to choose a bounded number of subgroups together with a bounded number of group elements, and again ask whether the identity is the only automorphism fixing them.

The scope of these questions is at least threefold: first, to improve the results obtained in~\cite{Babai1}; second, to characterize, in purely algebraic terms, the groups admitting a graphical representation with a bounded number of edge-orbits; and third, to develop ideas that may have further applications, particularly in the context of lattices.

Roughly speaking, some of the questions in~\cite[Section~8]{Babai1} can be interpreted in terms of the base size of $\mathrm{Aut}(R)$ in its natural action on $\mathrm{Sub}(R)$, that is, the base size of $\mathrm{A}(R)$. It is through the study of~\cite{Babai1}, and in connection with attempts to approach Babai's conjecture on lattices, that we arrived at the result stated in Theorem~\ref{thrm:main}.

Although Theorem~\ref{thrm:main} does not appear to have direct applications to Babai's conjecture on lattices, in the course of our exploration of this problem we have also developed methods (not included in this paper) that allow us to extend some of the results in~\cite{Babai1}, especially in the case of groups of odd order.

\section{Preliminaries}\label{sec:preliminaries}
The power automorphisms and their collection $\mathrm{PAut}(R)$ are related to the concept of the \textit{\textbf{Baer norm}}, see~\cite{Baer1}. Indeed, we let 
$$N(R)=\bigcap_{H\le R}{\bf N}_R(H)$$
be the set of elements of $R$ that normalize every subgroup of $R$. If we let $\mathrm{Inn}:R\to\mathrm{Aut}(R)$ be the mapping defined by $x\mapsto \mathrm{Inn}(x)$, where $\mathrm{Inn}(x)$ is the automorphism of $R$ given by conjugation by $x$, and we abuse notation slightly by considering $\mathrm{Inn}(N(R))$ to be a collection of automorphisms of $R$ (induced by conjugation by elements of $N(R)$) rather than automorphisms of $N(R)$, then 
\begin{equation}\label{eq:1}
\mathrm{Inn}(R)\cap \mathrm{PAut}(R)=\mathrm{Inn}(N(R)).
\end{equation}
Roughly speaking, the inner automorphisms that are power automorphisms arise by conjugation by elements of the Baer norm $N(R)$.

The structure of the Baer norm $N(R)$ of $R$ is rather restricted. Recall that a group is said to be a \textit{\textbf{Dedekind group}} if all of its subgroups are normal. Since the elements of $N(R)$ normalize each subgroup of $R$, every subgroup of $N(R)$ is normal in $N(R)$, and hence $N(R)$ is a Dedekind group. Dedekind~\cite{Dedekind} has shown that a Dedekind group is either abelian, or of the form $Q_8\times E\times O$, where $Q_8$ is the quaternion group of order $8$, $E$ is an elementary abelian $2$-group, and $O$ is an abelian group of odd order.

One of the main results concerning the Baer norm is a result of Schenkman~\cite{Schenkman}. To state this result we need some basic definitions. We let ${\bf Z}(R)$ denote the \textit{\textbf{center}} of $R$. This definition allows us to define an ascending series of subgroups of $R$, namely, given $n\in\mathbb{N}$, let ${\bf Z}_0(R)=1$, ${\bf Z}_1(R)={\bf Z}(R)$ and, for $n\ge 2$, let ${\bf Z}_n(R)$ be the subgroup of $R$ such that ${\bf Z}(R/{\bf Z}_{n-1}(R))={\bf Z}_n(R)/{\bf Z}_{n-1}(R)$. Moreover, the \textit{\textbf{derived subgroup}} $R'$ of $R$ is the subgroup of $R$ generated by the \textit{\textbf{commutators}} of $R$, that is, by the expressions $[x,y]=x^{-1}y^{-1}xy$, for $x,y\in R$. Finally, for a subgroup $H$ of $R$, $\mathbf{C}_R(H)$ is the centralizer of $H$ in $R$.

\begin{theorem}[{Schenkman~\cite[Theorem]{Schenkman}}]\label{thrm:schenkman}
We have $R'\le {\bf C}_R(N(R))$ and $N(R)\le {\bf Z}_2(R)$.
\end{theorem}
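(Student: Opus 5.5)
The plan is to reduce both assertions to one statement about a single element $x\in N(R)$: the commutator map $g\mapsto [x,g]$ takes values in ${\bf Z}(R)$. Granting this, $[N(R),R]\le {\bf Z}(R)$, which says exactly that $N(R){\bf Z}(R)/{\bf Z}(R)\le {\bf Z}(R/{\bf Z}(R))$, that is, $N(R)\le {\bf Z}_2(R)$. For the first assertion, fix $x\in N(R)$. Since every $[x,g]$ is central, the identity $[x,gh]=[x,h][x,g]^h$ becomes $[x,gh]=[x,h][x,g]$. Hence $g\mapsto [x,g]$ is a homomorphism from $R$ into the abelian group ${\bf Z}(R)$. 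Its kernel ${\bf C}_R(x)$ therefore contains $R'$, and intersecting over all $x\in N(R)$ gives $R'\le {\bf C}_R(N(R))$.

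To prove that $[x,g]$ is central, let $\alpha=\mathrm{Inn}(x)$. Because $x$ normalizes every cyclic subgroup $\langle y\rangle$, $\alpha$ is a power automorphism, so $y^\alpha=y^{n_y}$ for all $y\in R$. Let $\beta=\mathrm{Inn}(g)$. The automorphisms $\alpha$ and $\beta$ commute if and only if $\mathrm{Inn}(x^g)=\mathrm{Inn}(x)$, that is, $[x,g]\in{\bf Z}(R)$. So it suffices to prove that power automorphisms are central in $\mathrm{Aut}(R)$ (Cooper's theorem); we only need this for inner $\beta$, but the general argument costs nothing extra. For an arbitrary $\beta$ and any $y\in R$ we have
$$y^{\beta^{-1}\alpha\beta}=\bigl((y^{\beta^{-1}})^{n_{y^{\beta^{-1}}}}\bigr)^{\beta}=y^{n_{y^{\beta^{-1}}}}.$$
Thus $\beta^{-1}\alpha\beta=\alpha$ as soon as the exponent of $\alpha$ on an element $z$, taken modulo $|z|$, depends only on $|z|$ (note that $z=y^{\beta^{-1}}$ and $y$ have the same order).

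The main obstacle is this uniformity statement: if $|y|=|z|$, then $n_y\equiv n_z \pmod{|y|}$. First I would reduce to elements of prime-power order. Writing $y$ as a product of commuting elements of coprime prime-power orders, $\alpha$ acts on each factor by $n_y$, and by the Chinese remainder theorem it suffices to treat $p$-elements. For $p$-elements $y,z$ of order $p^k$ there are two cases. If $\langle y\rangle\cap\langle z\rangle\ne 1$, the exponents agree on the common subgroup. I would then bootstrap: $\alpha$ restricted to $\langle y,z\rangle$ is still a power automorphism, and I would compare $y^\alpha$, $z^\alpha$ and $(yz)^\alpha$ (or $(yz^{-1})^\alpha$) inside $\langle y,z\rangle$. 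If $\langle y\rangle\cap\langle z\rangle=1$, I would instead use $(yz)^{\alpha}=y^{n_y}z^{n_z}$ together with $(yz)^\alpha=(yz)^{n_{yz}}$. In the abelian case this gives $y^{n_y-n_{yz}}=z^{n_{yz}-n_z}\in\langle y\rangle\cap\langle z\rangle=1$, so $n_y\equiv n_{yz}\equiv n_z$. In the nonabelian case I would first use that $\langle y,z\rangle$ is normalized by the power automorphism, restrict to it, and induct on $|R|$, handling the genuinely nonabelian minimal configurations ($Q_8$-type $2$-groups, and $p$-groups of class $2$ in which $\langle y\rangle$ and $\langle z\rangle$ normalize each other) by a direct computation with the identity $(yz)^m=y^mz^m[z,y]^{\binom m2}$. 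This is the step I expect to require the most care, especially for $p=2$.
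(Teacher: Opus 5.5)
Your reduction is correct. The inclusion $[N(R),R]\le\mathbf{Z}(R)$ gives $N(R)\le\mathbf{Z}_2(R)$, your homomorphism argument then gives $R'\le\mathbf{C}_R(N(R))$, and for $x\in N(R)$ the condition $[x,g]\in\mathbf{Z}(R)$ is indeed equivalent to $\mathrm{Inn}(x)$ commuting with $\mathrm{Inn}(g)$. The gap is the strengthening you adopt next. Power automorphisms are \emph{not} central in $\mathrm{Aut}(R)$ in general, and the uniformity statement you call the main obstacle is false.

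Take $R=Q_8$ and $\alpha=\mathrm{Inn}(i)$, which is a power automorphism because $Q_8$ is Dedekind. Then $i^\alpha=i$ and $j^\alpha=-j=j^{-1}$, so $n_i\equiv 1$ and $n_j\equiv -1\pmod 4$, although $i$ and $j$ both have order $4$. Correspondingly, the power automorphism $\mathrm{Inn}(j)$ is not central in $\mathrm{Aut}(Q_8)\cong S_4$. Explicitly, the automorphism $\varphi$ from the proof of Lemma~\ref{l:-1} gives $\varphi^{-1}\mathrm{Inn}(j)\varphi=\mathrm{Inn}(j^\varphi)=\mathrm{Inn}(k)\neq\mathrm{Inn}(j)$. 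What Cooper's theorem actually says is that every power automorphism is a \emph{central automorphism}: it commutes with all inner automorphisms, equivalently $g^{-1}g^\alpha\in\mathbf{Z}(R)$ for all $g$. For $\alpha=\mathrm{Inn}(x)$ this is exactly the assertion $[x,R]\le\mathbf{Z}(R)$ you need. So it is the content of the theorem, not something you may take for granted.

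Your remark that the general argument ``costs nothing extra'' is therefore precisely where the plan breaks. Restricting to $\beta=\mathrm{Inn}(g)$ is essential. What must be shown is $n_{y^g}\equiv n_y\pmod{|y|}$ for all $y,g\in R$, i.e.\ that the exponent is constant on \emph{conjugacy classes}. The argument has to use that $y^g$ is a conjugate of $y$, not merely an element of the same order.

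Your case analysis cannot be turned into this as it stands. In $Q_8$ any two cyclic subgroups of order $4$ meet in $\langle -1\rangle$, so the counterexample above lies squarely in your case $\langle y\rangle\cap\langle z\rangle\neq 1$. Both the ``bootstrap'' and the direct computation for $Q_8$-type $2$-groups aim at a statement that fails in exactly those configurations. Hence the proposal contains no valid proof of the key fact $[N(R),R]\le\mathbf{Z}(R)$. The paper does not prove it either; it simply quotes Schenkman.
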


 When $\mathrm{Aut}(R)=\mathrm{PAut}(R)$ then $\mathrm{A}(R)=1$, so clearly the base size of $A(R)$ is $1$. Therefore it is relevant to observe the following.

\begin{lemma}\label{l:1}
We have $\mathrm{Aut}(R)=\mathrm{PAut}(R)$ if and only if $R$ is cyclic.
\end{lemma}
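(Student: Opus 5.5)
The easy direction is immediate. If $R$ is cyclic, every automorphism $\alpha$ sends a generator $g$ to another generator $g^{m}$. Hence $x^\alpha=x^m$ for every $x=g^k\in R$, so $\alpha\in\mathrm{PAut}(R)$.

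For the converse, assume $\mathrm{Aut}(R)=\mathrm{PAut}(R)$. The plan is to show first that $R$ is abelian, and then that an abelian group all of whose automorphisms are power automorphisms must be cyclic.

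\emph{Step 1: $R$ is abelian.} Every inner automorphism is a power automorphism, so by \eqref{eq:1} we have $\mathrm{Inn}(R)=\mathrm{Inn}(N(R))$, that is, $R=N(R){\bf Z}(R)$. By Schenkman's Theorem~\ref{thrm:schenkman}, $N(R)$ centralizes $R'$. Alternatively, we argue directly: $R$ normalizes every subgroup, so $R$ is a Dedekind group. By Dedekind's classification, either $R$ is abelian or $R\cong Q_8\times E\times O$.

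In the second case we exhibit an automorphism that is not a power automorphism. Take the automorphism of $Q_8$ of order $3$ permuting $i,j,k$ cyclically, extended by the identity on $E\times O$. It maps $\langle i\rangle$ to $\langle j\rangle$, so it is not a power automorphism. This contradiction shows $R$ is abelian.

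\emph{Step 2: an abelian non-cyclic $R$ has a non-power automorphism.} Write $R=\prod_p R_p$. Since $R$ is not cyclic, some Sylow subgroup $R_p$ is non-cyclic. Decompose $R_p=\langle a\rangle\times\langle b\rangle\times C$ with $|a|=p^{m}\ge |b|=p^{n}\ge p$.

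Define $\alpha$ to be the identity on $C$ and on the other Sylow subgroups, with $a\mapsto a$ and $b\mapsto a^{p^{m-n}}b$. This preserves orders, because $a^{p^{m-n}}$ has order $p^{n}$, so $\alpha$ extends to a homomorphism. It is bijective, since its inverse is $b\mapsto a^{-p^{m-n}}b$. Moreover $\alpha$ does not fix $\langle b\rangle$, because $a^{p^{m-n}}b\notin\langle b\rangle$. Hence $\alpha$ is not a power automorphism, which contradicts $\mathrm{Aut}(R)=\mathrm{PAut}(R)$. Therefore $R$ is cyclic.

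The only delicate point is Step 1, ruling out the non-abelian Dedekind groups. The explicit order-$3$ automorphism of the $Q_8$ factor disposes of it cleanly. Everything else is routine verification that the maps defined are automorphisms.
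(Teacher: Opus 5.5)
Your proposal is correct and follows essentially the paper's route: you reduce to Dedekind groups directly from the fact that inner automorphisms are power automorphisms (which is the content of \eqref{eq:1}), then use Dedekind's classification and exhibit an explicit non-power automorphism in the $Q_8\times E\times O$ case and in the non-cyclic abelian case. The only difference is that the paper delegates this last step to Lemma~\ref{l:-1}, which proves the stronger statement that the non-power automorphism can be chosen to fix any prescribed subgroup $H$ (needed later for \eqref{eq:4}); since you only need existence, your simpler choices (the order-$3$ automorphism cyclically permuting $i,j,k$, and $b\mapsto a^{p^{m-n}}b$) suffice, and your stray remark about Schenkman's theorem is unused and can be dropped.
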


We postpone the proof of Lemma~\ref{l:1} because we first prove an auxiliary result (needed in the proof of Lemma~\ref{l:1}) that will be useful also later.

\begin{lemma}\label{l:-1}
Let $R$ be a Dedekind group. If $R$ is not cyclic, then for every subgroup $H$ of $R$ there exists a non-power automorphism of $R$ fixing $H$ setwise.
\end{lemma}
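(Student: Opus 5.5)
The plan is to handle almost everything with \emph{central automorphisms of rank one}, and to treat $Q_8\times O$ separately. Let $p$ be a prime, let $z\in{\bf Z}(R)$ have order $p$, and let $\lambda\colon R\to\mathbb{Z}/p\mathbb{Z}$ be a homomorphism with $\lambda(z)=0$. Define
$$\alpha_{z,\lambda}\colon x\mapsto x z^{\lambda(x)}.$$
This is a homomorphism because $z$ is central. It is injective: if $xz^{\lambda(x)}=1$ then $x\in\langle z\rangle$, so $\lambda(x)=0$ and hence $x=1$. So $\alpha_{z,\lambda}\in\mathrm{Aut}(R)$. It fixes $H$ setwise whenever $z\in H$ (then $H^{\alpha}\subseteq H$) or $H\le\ker\lambda$ (then $\alpha$ is the identity on $H$). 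It is not a power automorphism as soon as some $x$ has $\lambda(x)\ne 0$ and $xz^{\lambda(x)}\notin\langle x\rangle$, which holds when $z\notin\langle x\rangle$. Throughout I write $H=\prod_q H_q$ as the product of its Sylow subgroups; this is legitimate since $H$ is nilpotent. I also use that an automorphism of a characteristic direct factor, extended by the identity on a complementary factor, is an automorphism of $R$.

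\textbf{Abelian case.} Since $R$ is not cyclic, some Sylow $p$-subgroup $P$ is non-cyclic, so $P/\Phi(P)$ has rank $d\ge 2$. Write $P=\langle a_1\rangle\times\cdots\times\langle a_d\rangle$. I would work inside $P$, make $\lambda$ trivial on the $p'$-part, and split according to $H_p$.

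If $H_p=1$, take $z$ of order $p$ in $\langle a_1\rangle$ and let $\lambda$ be the coordinate functional dual to $a_2$ modulo $\Phi(P)$. Then $\lambda(z)=0$ and $\lambda(a_2)=1$, and $z\notin\langle a_2\rangle$. Moreover $\lambda$ vanishes on $H$, since $H$ is a $p'$-group. So $\alpha_{z,\lambda}$ fixes $H$ and is not a power automorphism.

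If $H_p\ne 1$, pick $z\in H_p$ of order $p$, so $z\in H$. It remains to find $\lambda$ with $\lambda(z)=0$ and an element $x$ with $\lambda(x)=1$ and $z\notin\langle x\rangle$. Suppose first $z\in\Phi(P)$, so every $\lambda$ kills $z$. Write $z=\prod a_i^{e_i}$ and choose an index $j$ for which $a_j$-coordinate of $z$ is not the whole support of $z$ in a way that forces $z\in\langle a_j\rangle$; I expect the genuine subcase is $z\in\langle a_1\rangle$, where one takes $x=a_2$ and $\lambda$ dual to $a_2$. Otherwise $z\notin\Phi(P)$, so $z$ generates a cyclic direct factor of order $p$. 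Re-choose the decomposition with $a_1=z$, take $\lambda$ dual to $a_2$, and take $x=a_2$.

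\textbf{Non-abelian case.} Here $R=Q_8\times E\times O$.

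If $E\ne 1$, apply the same construction to the $2$-part $Q_8\times E$, with $\lambda$ trivial on $O$. The key point is that $\Omega_1({\bf Z})=\langle -1\rangle\times E$ has rank at least $2$. If $H_2=1$, choose $z\in E$ and $\lambda$ with $\lambda(i)=1$ and $\lambda(z)=0$. Then $i\mapsto iz$ is not a power map, since $z\notin\langle i\rangle$. If $H_2\ne 1$, take $z$ a central involution in $H$. When $z\ne -1$, pick $\lambda$ vanishing on $z$ with $\lambda(i)=1$. When $z=-1$, pick $\lambda$ with $\lambda(e)=1$ for some $e\in E$, and check that $e\mapsto -e$ is not a power map, because $-e\notin\langle e\rangle$.

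If $E=1$, central automorphisms only produce inner-like power maps, so I would use the outer action instead. Recall that $\mathrm{Aut}(Q_8)/\mathrm{Inn}(Q_8)\cong S_3$ permutes the subgroups $\langle i\rangle,\langle j\rangle,\langle k\rangle$, and $\mathrm{Inn}(Q_8)$ consists of power automorphisms. The projection $H_2$ of $H$ is one of $1,\langle -1\rangle,\langle i\rangle,\langle j\rangle,\langle k\rangle,Q_8$. Each of these is fixed by an automorphism inducing a transposition: for instance, the map swapping $i$ and $j$ and sending $k$ to $-k$ fixes $\langle k\rangle$, and analogous maps fix the other choices. Such an automorphism is not a power automorphism, since it moves $\langle i\rangle$. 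Extend it by the identity on $O$; it then fixes $H=H_2\times H_O$.

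The main obstacle is the bookkeeping in the abelian subcase $z\in\Phi(P)$: one must choose $\lambda$ and $x$ so that $z\notin\langle x\rangle$. I would resolve this by always selecting $x$ among basis elements $a_j$ whose cyclic factor does not contain $z$. Such a $j$ exists when $z$ lies in a single factor, since $d\ge 2$. When $z$ has support on several factors, $z$ lies in no $\langle a_j\rangle$ at all, so $x=a_1$ works.
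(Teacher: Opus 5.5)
Your proof is correct, and it takes a somewhat different route from the paper's. Your map $\alpha_{z,\lambda}$ is exactly the central automorphism $\phi_\chi$ of Lemma~\ref{chi}, with $A=\langle z\rangle$ and $K=\ker\lambda$. The paper's abelian case uses the same kind of map: $a_i\mapsto a_ia_j^{p^{e_j-1}}$ is $\alpha_{z,\lambda}$ with $z=a_j^{p^{e_j-1}}$ and $\lambda$ dual to $a_i$.

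The difference is in how $H$ gets fixed.
\begin{itemize}
\item The paper makes the map trivial on $H$. This forces it to write $H\cap R_p=\prod_i\langle a_i^{p^{f_i}}\rangle$ for a basis of $R_p$.
\item In the case $Q_8\times E\times O$, the paper first moves $H$ by an automorphism into a product $H_1\times H_2\times H_3$. It then applies one fixed outer automorphism of $Q_8$.
\item You instead put $z$ inside $H$ whenever the relevant part of $H$ is nontrivial. You then only have to control $\lambda$ relative to a single element, and you need no normal form for $H$.
\end{itemize}

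This is a real gain, because a basis of a finite abelian $p$-group adapted to a given subgroup need not exist. Take $P=C_p\times C_{p^3}$, written additively. The cyclic subgroup $\langle(1,p)\rangle$ has order $p^2$, so in adapted form it would have to be $\langle pa\rangle$ for a basis element $a$ of order $p^3$. That would put it inside $pP$, which it is not.

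Likewise, the reduction to product form when $E\neq 1$ needs an argument the paper does not give, and your central automorphisms make it unnecessary. What the paper's approach buys is uniformity in the non-abelian case, where one automorphism of $Q_8$ serves for every $E$. You need that idea only when $E=1$, and there your choice of a transposition fixing $H_2$ is essentially the same argument.

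A few points in your write-up should be tidied.
\begin{itemize}
\item In the subcase $z\in\Phi(P)$, the sentence beginning ``choose an index $j$'' is garbled. Replace it with your final paragraph: some $\langle a_j\rangle$ misses $z$, since distinct factors meet trivially and $d\ge 2$.
\item For $z\notin\Phi(P)$ you do not need to re-choose the basis. Take any nonzero $\lambda$ with $\lambda(z)=0$, which exists since $d\ge 2$. Then any $x$ with $\lambda(x)\neq 0$ works: $z\in\langle x\rangle$ together with $z\notin\Phi(P)$ would force $\langle x\rangle=\langle z\rangle\le\ker\lambda$.
\item If you keep the direct-factor claim instead, justify it by $P=\langle z\rangle\times\ker\mu$ for some $\mu\colon P\to\mathbb{Z}/p\mathbb{Z}$ with $\mu(z)\neq 0$.
\item Your remark that for $E=1$ central automorphisms yield only power maps is false when $O$ is non-cyclic. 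It is harmless, though, because your $Q_8$ argument handles every $O$.
\end{itemize}
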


\begin{proof}
We use the classification of Dedekind~\cite{Dedekind} for Dedekind groups. Suppose first that $R=Q_8\times E\times O$, where $Q_8$ is the quaternion group of order $8$, $E$ is an elementary abelian $2$-group and $O$ is an abelian group of odd order. Let $H$ be a subgroup of $R$. Then, replacing $H$ by a suitable $\mathrm{Aut}(R)$-conjugate, we have
$H=H_1\times H_2\times H_3$, where $H_1\le Q_8$, $H_2\le E$ and $H_3\le O$.
We let $i,j,k,-i,-j,-k,-1,1$ denote the elements of $Q_8$. Representatives for the $\mathrm{Aut}(Q_8)$-classes of subgroups of $Q_8$ are $1,\langle -1\rangle$, $\langle i\rangle$ and $Q_8$. Therefore, replacing $H_1$ by a suitable $\mathrm{Aut}(Q_8)$-conjugate, we may suppose that $H_1$ is one of these four subgroups. Now, $Q_8$ admits an automorphism $\varphi$ with $i^\varphi=i$, $j^\varphi=k$ and $k^\varphi=-j$. Clearly, $\varphi$ is not a power automorphism (because $k=j^\varphi\notin\langle j\rangle$) and it fixes setwise $1,\langle -1\rangle$, $\langle i\rangle$ and $Q_8$. This automorphism $\varphi$ extends to an automorphism of $R=Q_8\times E\times O$ fixing $H$ setwise.

Assume then that $R$ is abelian and non-cyclic. Let $H$ be a subgroup of $R$. For each prime $p$, let $R_p$ be the Sylow $p$-subgroup of $R$. Thus $R=\prod_p R_p$ and $H=\prod_p (H\cap R_p)$. In particular, there exists a prime $p$ such that $R_p$ is not cyclic. Using the structure theorem of finitely generated abelian groups, we may write 
\begin{align*}
R_p&=\langle a_1\rangle\times \cdots\times\langle a_\ell\rangle,\\
H\cap R_p&=\langle a_1^{p^{f_1}}\rangle\times \cdots\times\langle a_\ell^{p^{f_\ell}}\rangle,
\end{align*}
where $\ell\ge 2$, and taking $e_i$ to be the nonnegative integer such that $\mathbf{o}(a_i)=p^{e_i}$, we have $e_i\ge f_i \ge 0$ and $e_i\ge 1$ for every $i\in \{1,\ldots,\ell\}$. 

If $H \cap R_p \neq R_p$ then there is some $i$ such that $f_i>0$ and some $1\le j \le \ell$ with $j \neq i$; fix such an $i$ and $j$. Otherwise, take $i=1$ and $j=2$. Let $\varphi$ be the homomorphism of $R_p$ into itself mapping $a_k$ to $a_k$ when $k\ne i$ and mapping $a_i$ to $a_i a_j^{p^{e_j-1}}$. 
Clearly, $\varphi$ is an automorphism of $R_p$ because it is a surjective homomorphism. Moreover, $\varphi$ fixes $H\cap R_p$ setwise: if $H\cap R_p=R_p$ then this is obvious; otherwise since $f_i \ge 1$,
 $$(a_i^\varphi)^{p^{f_i}}=a_i^{p^{f_i}}a_j^{p^{e_j-1+f_i}}=a_i^{p^{f_i}}\in H\cap R_p.$$
Furthermore, $\varphi$ is a non-power automorphism of $R_p$ because $a_i^{-1}a_i^\varphi=a_j^{p^{e_j-1}}\notin\langle a_i\rangle$.

Now, $\varphi$ extends to a group automorphism $\phi$ of $R$ by fixing $R_q$ element-wise, for every $q\ne p$. Hence, $\phi$ is a non-power automorphism fixing $H$ setwise. 
\end{proof}

\begin{proof}[Proof of Lemma~$\ref{l:1}$]
Clearly, if $R$ is cyclic, then $\mathrm{Aut}(R)=\mathrm{PAut}(R)$. Conversely, suppose $\mathrm{Aut}(R)=\mathrm{PAut}(R)$. In particular, $\mathrm{Inn}(R) \le \mathrm{PAut}(R)$, so by~\eqref{eq:1}, we have $\mathrm{Inn}(N(R))=\mathrm{Inn}(R)\cap \mathrm{PAut}(R)=\mathrm{Inn}(R)$. Let $x \in R$ be arbitrary. Then this says $\mathrm{Inn}(x)=\mathrm{Inn(y)}$ for some $y \in N(R)$. By definition of $N(R)$, $y$ normalizes every subgroup $H$ of $R$ and therefore so must $x$; hence $R=N(R)$. In particular, $R$ is a Dedekind group. Now the proof follows from Lemma~\ref{l:-1}. 
\end{proof}

\section{Proof of Theorem~$\ref{thrm:main}$}\label{sec:proof}
The rest of the paper is devoted to the proof of Theorem~\ref{thrm:main}. Since Lemma~\ref{l:1} shows one implication, we may assume that the base size of the action of $\mathrm{Aut}(R)$ on $\mathrm{Sub}(R)$ is $1$ and we need to prove that $R$ is cyclic. Since the base size is $1$, there exists a subgroup $H$ of $R$ such that
\begin{equation}\label{eq:2}
{\bf N}_{\mathrm{Aut}(R)}(H)=\mathrm{PAut}(R).
\end{equation}
Now, by definition of $N(R)$, $N(R) \le \mathbf{N}_R(H)$, so~\eqref{eq:1} and~\eqref{eq:2} imply
\begin{equation}\label{eq:3}
N(R)={\bf N}_R(H).
\end{equation}

We single out the first step of the proof in a separate result, because we believe it is of independent interest.

\begin{lemma}\label{l:2}
Let $R$ be a finite group and let $H$ be a subgroup of $R$ such that $N(R)={\bf N}_R(H)$. Then $R$ is nilpotent of nilpotency class at most $3$.
\end{lemma}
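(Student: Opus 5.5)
The plan is to show directly that $R'\le {\bf Z}_2(R)$. Then $R/{\bf Z}_2(R)$ is abelian, so ${\bf Z}_3(R)=R$, and $R$ is nilpotent of class at most $3$. The main tool is Schenkman's Theorem~\ref{thrm:schenkman}, combined with the observation that $H$ itself lies low in the upper central series.

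\emph{Step 1: locating $H$.} Since $H\le {\bf N}_R(H)=N(R)$, Theorem~\ref{thrm:schenkman} gives $H\le N(R)\le {\bf Z}_2(R)$. Write $Z={\bf Z}(R)$. Because $H\le {\bf Z}_2(R)$, every commutator $[h,x]$ with $h\in H$ and $x\in R$ lies in $Z$.

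\emph{Step 2: a commutator homomorphism.} For $x\in R$, define $\varphi_x\colon H\to Z/(H\cap Z)$ by $h\mapsto [h,x](H\cap Z)$.
\begin{itemize}
\item Each $\varphi_x$ is a homomorphism: $[hk,x]=[h,x]^k[k,x]=[h,x][k,x]$, because $[h,x]$ is central.
\item The assignment $x\mapsto\varphi_x$ is a homomorphism from $R$ to the abelian group $\mathrm{Hom}(H,Z/(H\cap Z))$: $[h,xy]=[h,y][h,x]^y=[h,x][h,y]$, again by centrality.
\item Its kernel is exactly ${\bf N}_R(H)$. Indeed, $x$ normalizes $H$ if and only if $[h,x]\in H$ for all $h\in H$. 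Since $[h,x]\in Z$ automatically, this holds if and only if $[H,x]\le H\cap Z$, that is, $\varphi_x$ is trivial.
\end{itemize}

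\emph{Step 3: conclusion.} By the hypothesis, the kernel is $N(R)$, so $R/N(R)$ embeds in an abelian group. Hence $R'\le N(R)\le {\bf Z}_2(R)$, which is what we wanted.

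The only delicate point is the bookkeeping in Step 2: checking that both maps are genuinely homomorphisms. This uses only $[H,R]\le Z$ and the standard commutator identities $[ab,c]=[a,c]^b[b,c]$ and $[a,bc]=[a,c][a,b]^c$. I expect no serious obstacle beyond the application of Schenkman's theorem in Step 1.
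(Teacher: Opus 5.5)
Your proof is correct. It differs from the paper's only in how you obtain the inclusion $R'\le N(R)$.

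The paper quotes both halves of Schenkman's theorem and reads off the chain $R'\le \mathbf{C}_R(N(R))=\mathbf{C}_R(\mathbf{N}_R(H))\le \mathbf{C}_R(H)\le \mathbf{N}_R(H)=N(R)\le \mathbf{Z}_2(R)$. You use only the half $N(R)\le\mathbf{Z}_2(R)$, to get $[H,R]\le\mathbf{Z}(R)$. You then show directly that $R/\mathbf{N}_R(H)$ embeds in the abelian group $\mathrm{Hom}(H,\mathbf{Z}(R)/(H\cap\mathbf{Z}(R)))$. Your commutator identities and your kernel computation via $h^x=h[h,x]$ are both correct.

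In substance the two arguments coincide. If you drop the quotient by $H\cap\mathbf{Z}(R)$, your homomorphism $x\mapsto(h\mapsto[h,x])$ has kernel $\mathbf{C}_R(H)$. That is exactly the standard proof that $R'$ centralizes every subgroup of $\mathbf{Z}_2(R)$. It is also why the inclusion $R'\le\mathbf{C}_R(N(R))$ in Schenkman's theorem already follows from $N(R)\le\mathbf{Z}_2(R)$.

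So the paper's version is a one-line chain because it cites that inclusion. Yours is more self-contained, and it isolates the elementary fact that $R'\le\mathbf{N}_R(H)$, indeed $R'\le\mathbf{C}_R(H)$, for every $H\le\mathbf{Z}_2(R)$.
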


\begin{proof}
From Theorem~\ref{thrm:schenkman}, the hypothesis $N(R)=\mathbf{N}_R(H)$, and the fact that $H\le {\bf N}_R(H)$, we deduce the series of inequalities
$$R'\le {\bf C}_R(N(R))={\bf C}_R({\bf N}_R(H))\le {\bf C}_R(H)\le {\bf N}_R(H)=N(R)\le {\bf Z}_2(R).$$
This shows that $[R,R,R]=[R',R]\le [{\bf Z}_2(R),R]\le {\bf Z}(R)$ and hence $[R,R,R,R]\le [{\bf Z}(R),R]=1$.
\end{proof}
We now continue with the proof of Theorem~\ref{thrm:main}. From~\eqref{eq:3} and Lemma~\ref{l:2}, we deduce that $R$ is nilpotent of class at most $3$. Therefore, 
$$R=\prod_p R_p,$$
where $R_p$ is the Sylow $p$-subgroup of $R$ and $R_p$ has nilpotency class at most $3$. Since each $R_p$ is characteristic, we have
\begin{align*}
\mathrm{Aut}(R)&=\mathrm{Aut}\left(\prod_p R_p\right)=\prod_p \mathrm{Aut}(R_p),\\
\mathrm{PAut}(R)&=\mathrm{PAut}\left(\prod_p R_p\right)=\prod_p \mathrm{PAut}(R_p).
\end{align*}
In particular,~\eqref{eq:1} implies $\mathrm{PAut}(R_p)={\bf N}_{\mathrm{Aut}(R_p)}(H\cap R_p)$ for every prime $p$, and hence for the rest of the proof we may assume, without loss of generality, that $R$ itself is a $p$-group for some prime $p$. Moreover, we may suppose that $R$ has nilpotency class at most $3$. 

We argue now by contradiction and suppose that $R$ is not cyclic. 

We claim that
\begin{equation}\label{eq:4}
N(R)<R.
\end{equation}
If $R=N(R)$, then $R$ is a Dedekind group, and hence we obtain a contradiction from Lemma~\ref{l:-1}. This establishes~\eqref{eq:4}.

The following two technical lemmas will be used repeatedly during the remainder of our proof.

\begin{lemma}\label{chi}
Suppose $A\le K \le R$ and $A \le \mathbf{Z}(R)$. Let $\chi: R \to A$ be a homomorphism with kernel $K$, and define $\phi_\chi: R \to  R$ by $x^{\phi_\chi}=x\chi(x)$ for every $x \in R$. Then $\phi_\chi$ is a group automorphism. Furthermore, if $H \le K$ then $\phi_\chi$ is a power automorphism.
\end{lemma}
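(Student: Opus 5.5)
The plan is to check the two claims directly from the definitions: first that $\phi_\chi$ is an automorphism, then that $H\le K$ makes it a power automorphism via~\eqref{eq:2}.

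\emph{$\phi_\chi$ is a homomorphism.} Take $x,y\in R$. Since $\chi$ is a homomorphism and $\chi(x)\in A\le{\bf Z}(R)$, we can move $\chi(x)$ past $y$:
$$(xy)^{\phi_\chi}=xy\,\chi(xy)=xy\,\chi(x)\chi(y)=x\chi(x)\,y\chi(y)=x^{\phi_\chi}y^{\phi_\chi}.$$

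\emph{$\phi_\chi$ is injective, hence bijective.} Suppose $x^{\phi_\chi}=1$, so $x=\chi(x)^{-1}$. Then $x\in A\le K=\ker\chi$, which gives $\chi(x)=1$ and therefore $x=1$. So $\phi_\chi$ is an injective endomorphism of the finite group $R$, and hence an automorphism. This is where the hypothesis $A\le K$ is used.

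\emph{The case $H\le K$.} Here I would use the standing assumption~\eqref{eq:2}, namely ${\bf N}_{\mathrm{Aut}(R)}(H)=\mathrm{PAut}(R)$. This assumption is what turns the conclusion into a statement about power automorphisms: in general, fixing one subgroup does not force an automorphism to be a power automorphism. For every $h\in H$ we have $\chi(h)=1$, so $h^{\phi_\chi}=h$. Thus $\phi_\chi$ fixes $H$ pointwise, and in particular $H^{\phi_\chi}=H$. Hence $\phi_\chi\in{\bf N}_{\mathrm{Aut}(R)}(H)=\mathrm{PAut}(R)$.

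There is no real obstacle in this argument. The only point needing care is the injectivity step, which depends on $A$ lying inside the kernel $K$.
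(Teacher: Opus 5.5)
Your proof is correct and follows the paper's argument essentially step for step. Centrality of $\chi(x)$ gives the homomorphism property, $A\le K=\ker\chi$ gives a trivial kernel, and then pointwise fixing of $H$ together with the standing assumption ${\bf N}_{\mathrm{Aut}(R)}(H)=\mathrm{PAut}(R)$ gives the power-automorphism conclusion; you are also right that this last part rests on the choice of $H$, not on the lemma's hypotheses alone.
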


\begin{proof}
Since $\chi(x) \in A \le \mathbf{Z}(R)$, we have
$$x^{\phi_\chi}y^{\phi_\chi}= x\chi(x)y\chi(y)=xy\chi(x)\chi(y)=xy\chi(xy)=(xy)^{\phi_\chi}$$
for every $x,y \in R$, so $\phi_\chi$ is a homomorphism. Let $x \in \mathrm{Ker}(\phi_\chi)$. Then $1=x^{\phi_\chi}=x\chi(x)$ and hence $x^{-1}=\chi(x) \in A$. Since $A \le K=\mathrm{Ker}(\chi)$, we have $\chi(x)=1$, forcing $x=1$. Therefore $\mathrm{Ker}(\phi_\chi)=1$, so $\phi_\chi$ is a group automorphism as claimed.

If $H \le K$ then $\phi_\chi$ fixes each element of $H$ and hence fixes $H$ setwise. By our choice of $H$, this means $\phi_\chi$ is a power automorphism.
\end{proof}

\begin{lemma}\label{prime-aut}
Suppose $y \in \mathbf{Z}_2(R)\setminus \mathbf{Z}(R)$ and $|[R,y]|=p$ is prime. If $p$ is odd, further suppose $\mathbf{o}(y)=p$, while if $p=2$, suppose that $y^2=z$ is the involution of $\mathbf{Z}(R)$.

Fix $t \in R \setminus \mathbf{C}_R(y)$, and for every $c \in \mathbf{C}_R(y)$ and $x \in R$, if $x=t^ic$ with $0 \le i \le p-1$ then define $\phi(t^ic)=(yt)^ic$. Then $\phi$ is an automorphism of $R$.
\end{lemma}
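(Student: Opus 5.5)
The plan is to verify directly that $\phi$ is a well-defined bijective homomorphism. The argument rests on two facts: conjugation by $yt$ agrees with conjugation by $t$ on $\mathbf{C}_R(y)$, and $(yt)^p=t^p$.

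\emph{Step 1: well-definedness.} Since $y\in\mathbf{Z}_2(R)$, every commutator $[x,y]$ lies in $\mathbf{Z}(R)$. Hence $[xw,y]=[x,y]^w[w,y]=[x,y][w,y]$, so $x\mapsto[x,y]$ is a homomorphism $R\to[R,y]$ with kernel $C:=\mathbf{C}_R(y)$. As $|[R,y]|=p$ and $y\notin\mathbf{Z}(R)$, this map is onto, so $C\trianglelefteq R$ has index $p$. It follows that $t^p\in C$ and that every $x\in R$ is uniquely of the form $t^ic$ with $0\le i\le p-1$ and $c\in C$. Thus $\phi$ is well defined. Note also that $y\in C$.

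\emph{Step 2: two identities.} First, since $y$ centralizes $C$, we have $(yt)^{-j}c\,(yt)^j=t^{-j}ct^j$ for all $c\in C$ and all $j$. Second, I will show $(yt)^p=t^p$. Put $u=[t,y]\in[R,y]\le\mathbf{Z}(R)$, so $u^p=1$. The standard class-$2$ computation gives $(yt)^i=y^it^iu^{\pm\binom{i}{2}}$, with the sign depending only on commutator conventions.
\begin{itemize}
\item For $p$ odd, $y^p=1$ and $p\mid\binom p2$, so $(yt)^p=t^p$.
\item For $p=2$, directly $(yt)^2=y^2\,t\,[t,y]\,t=z\,u\,t^2$. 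Here $u$ is the nontrivial element of the central subgroup $[R,y]$ of order $2$, hence an involution of $\mathbf{Z}(R)$, so $u=z$ by the hypothesis that $z$ is the involution of $\mathbf{Z}(R)$. Therefore $(yt)^2=z^2t^2=t^2$.
\end{itemize}
This verification, especially the $p=2$ case where the hypothesis on $z$ is needed, is the main obstacle; everything else is formal.

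\emph{Step 3: homomorphism.} Take $x=t^ic$ and $w=t^jd$ with $c,d\in C$. Then
$$xw=t^{i+j}\,(t^{-j}ct^j)\,d,$$
and $t^{-j}ct^j\in C$ by normality. If $i+j\ge p$, rewrite $t^{i+j}=t^{i+j-p}\,t^p$ and absorb $t^p\in C$ into the $C$-part. Now apply $\phi$, using Step 2 in two ways: $(yt)^p=t^p$ handles the carry, and $t^{-j}ct^j=(yt)^{-j}c\,(yt)^j$. This gives
$$\phi(xw)=(yt)^{i}\,c\,(yt)^{j}\,d=\phi(x)\phi(w)$$
in both the case $i+j<p$ and the case $i+j\ge p$. (The order of factors needs a little care when $t^p$ is moved past the conjugated element, but $t^p=(yt)^p$ commutes with $yt$, so this is harmless.)

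\emph{Step 4: bijectivity.} Suppose $\phi(t^ic)=1$. Then $(yt)^i\in C$. Since $y\in C$ and $u\in\mathbf{Z}(R)\le C$, the expression $(yt)^i=y^it^iu^{\pm\binom i2}$ forces $t^i\in C$, so $i=0$, and then $c=1$. Thus $\phi$ is injective, hence bijective because $R$ is finite, and so $\phi\in\mathrm{Aut}(R)$.
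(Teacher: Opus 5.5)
Your proposal is correct and follows essentially the same route as the paper: you use the commutator map $x\mapsto[x,y]$ to get the normal decomposition $R=\bigcup_i t^i\mathbf{C}_R(y)$, the identity $c^{t^j}=c^{(yt)^j}$ for $c\in\mathbf{C}_R(y)$, the class-$2$ formula giving $(yt)^p=t^p$ (with $[t,y]=z$ when $p=2$), a case split on $i+j<p$ versus $i+j\ge p$, and a coset argument for injectivity. Your version is slightly more explicit than the paper's in justifying that the commutator map is a homomorphism and that $y\in\mathbf{C}_R(y)$ forces $(yt)^i\in t^i\mathbf{C}_R(y)$.
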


\begin{proof}
Define $\varphi: R \to R$ by $x^\varphi=[x,y]$ for every $x \in R$, and observe that $\varphi$ is a homomorphism whose image is $[R,y]$ and whose kernel is $\mathbf{C}_R(y)$. Thus $R/\mathbf{C}_R(y) \cong [R,y]$ is cyclic of order $p$, so every element of $R$ can be written uniquely as $t^ic$ for some $0 \le i \le p-1$ and some $c \in \mathbf{C}_R(y)$.

Arguing inductively on $i$, using the fact that $y$ commutes with every element of $\mathbf{C}_R(y)$ and that $c \in \mathbf{C}_R(y)$ implies $c^{t^i} \in \mathbf{C}_R(y)$ since $R/\mathbf{C}_R(y)$ is cyclic, we get $$c^{t^i}=c^{(yt)^i}.$$

We can now show that $\phi$ is a homomorphism. Let $c_1, c_2 \in \mathbf{C}_R(y)$ and $0 \le i_1, i_2 <p$ and suppose $i_1+i_2<p$. Then 
\begin{align*}
(t^{i_1}c_1 \cdot t^{i_2}c_2)^\phi&=(t^{i_1+i_2}c_1^{t^{i_2}}c_2)^\phi=(yt)^{i_1+i_2}c_1^{t^{i_2}}c_2\\
&=(yt)^{i_1+i_2}c_1^{(yt)^{i_2}}c_2=(yt)^{i_1}c_1(yt)^{i_2}c_2=(t^{i_1}c_1)^\phi(t^{i_2}c_2)^\phi.
\end{align*}

In order to deal with the case $i_1+i_2 \ge p$ we need an additional observation. Standard commutator computations give
$$(yt)^p=y^pt^p[t,y]^{\binom{p}{2}}.$$
If $p$ is odd then by hypothesis $y^p=1$; also $p$ divides $\binom{p}{2}$ and since $[t,y] \in [R,y]$ which has order $p$, this is $t^p$. If $p=2$ then this is $y^2[t,y]t^p$ and by hypothesis $[t,y]=y^2=z$ which has order $2$, so again we have $t^p$.

Now assume $i_1+i_2 \ge p$, say $i_1+i_2=p+i_3$ where $0 \le i_3 <p$. Then using our new observation together with calculations similar to the previous case, we have
\begin{align*}
(t^{i_1}c_1 \cdot t^{i_2}c_2)^\phi&=(t^{i_1+i_2}c_1^{t^{i_2}}c_2)^\phi=(t^{i_3}t^pc_1^{t^{i_2}}c_2)^\phi\\
&=(yt)^{i_3}t^pc_1^{t^{i_2}}c_2=(yt)^{i_3}(yt)^pc_1^{t^{i_2}}c_2\\
&=(yt)^{i_1+i_2}c_1^{(yt)^{i_2}}c_2=(yt)^{i_1}c_1(yt)^{i_2}c_2=(t^{i_1}c_1)^\phi(t^{i_2}c_2)^\phi.
\end{align*}

Since $t^ic$ and $(yt)^ic$ lie in the same coset of $\mathbf{C}_R(y)$, it is clear that the kernel of $\phi$ is trivial, so $\phi$ is an automorphism.
\end{proof}

Let $A$ be a subgroup of ${\bf Z}(R)$ and let $K$ be a subgroup of $R$ with $|R:K|=|A|=p$ and $AH\le K$ (since $R$ is not cyclic we cannot have $H=R$, so finding such a pair $(A,K)$ is possible). Let $\chi:R\to A$ be any surjective group homomorphism with kernel $K$: the existence of $\chi$ is clear because $R/K\cong A$; thus we have $p-1$ choices for an isomorphism from $R/K$ to $A$, and each such isomorphism can be lifted to a homomorphism $\chi$ from $R$ to $A$ with kernel $K$. By Lemma~\ref{chi}, the map $\phi_\chi$ defined by $x^{\phi_\chi}=x\chi(x)$ for every $x\in R$ is a power automorphism.

Assume now that ${\bf Z}(R)$ admits two distinct subgroups $A_1,A_2$ and that $R$ admits a subgroup $K$ with $|R:K|=|A_1|=|A_2|=p$ and $A_1A_2H\le K$. By Lemma~\ref{chi}, $\phi_{\chi_1}$ and $\phi_{\chi_2}$ are both power automorphisms of $R$. Let $x\in R\setminus K$. Then $x\chi_i(x)=x^{\phi_{\chi_i}}\in \langle x\rangle$, and hence $\chi_i(x)\in \langle x\rangle$. As $x$ is an arbitrary element outside $\mathrm{Ker}(\chi_i)$ and $\chi_i:R \to A_i$, we deduce $A_i\le\langle x\rangle$ and hence $A_1A_2\le\langle x\rangle$, which is a contradiction because $\langle x\rangle$ is cyclic whereas $A_1A_2$ is not. This contradiction arises from assuming the existence of $A_1,A_2$ and $K$ as above. Therefore, there are two alternatives: either ${\bf Z}(R)$ is cyclic (which excludes the existence of two distinct subgroups of order $p$ in ${\bf Z}(R)$), or ${\bf Z}(R)H=R$. In the latter case, $R={\bf N}_R(H)=N(R)$ and we contradict~\eqref{eq:4}. We have shown that
\begin{equation}\label{eq:5}
{\bf Z}(R) \hbox{ is cyclic}.
\end{equation}

We claim that
\begin{equation}\label{eq:6}
N(R) \hbox{ is abelian}.
\end{equation}
Suppose that $N(R)$ is non-abelian. Then, by the theorem of Dedekind, $N(R)=Q\times E$, where $Q\cong Q_8$ and $E$ is an elementary abelian $2$-group. Let $\langle z\rangle={\bf Z}(Q)$. Since $\langle z \rangle$ is characteristic in $N(R) \triangleleft R$, we have $\langle z \rangle \triangleleft R$. Since $R$ is a $p$-group, every normal subgroup has nontrivial intersection with $\mathbf{Z}(R)$, so $\langle z \rangle \le \mathbf{Z}(R)$. Since $\mathbf{Z}(R)$ is cyclic and ${\bf Z}(R)\le N(R)$, we have ${\bf Z}(R)=\langle z\rangle$. 

Observe that $H \le \mathbf{N}_R(H)=N(R)$. Suppose that ${\bf Z}(R)\le H$. For every $x \in R$ and $h \in H$ we have $[h,x]\in \mathbf{Z}(R)$ because $h \in \mathbf{Z}_2(R)$ from Theorem~\ref{thrm:schenkman}.  Thus $h^x=h[h,x] \in H$ since $h, [h,x] \in H$, so $H\unlhd R$. This implies $N(R)={\bf N}_R(H)=R$, and as usual we contradict~\eqref{eq:4}. So we must have ${\bf Z}(R)\nleq H$ and hence ${\bf Z}(R)\cap H=1$. From the structure of $N(R)=Q\times E$, we deduce that $H$ is an elementary abelian $2$-group and $H\le {\bf Z}(N(R))$.

Let $y\in N(R)$ with ${\bf o}(y)=4$. From the structure of $N(R)$, we deduce that $y^2=z$. We also have $y \in \mathbf{Z}_2(R)\setminus\mathbf{Z}(R)$, so since $[R,y] \le \mathbf{Z}(R)$ which has order 2, the conditions of Lemma~\ref{prime-aut} are met with $p=2$, and with $t \in R\setminus \mathbf{C}_R(y)$ fixed, the map $\phi:R\to R$ defined by $\phi(tc)=ytc$ for every $c \in \mathbf{C}_R(y)$ is a group automorphism. 
Since $H\le {\bf Z}(N(R))$, we have $H\le {\bf C}_R(y)$ and hence $\phi$ fixes $H$ pointwise. Thus $\phi$ is a power automorphism. However, $yt=t^\phi\notin\langle t\rangle$, because $t$ does not commute with $y$. This contradiction establishes~\eqref{eq:6}. 

Since $N(R)$ is abelian by~\eqref{eq:6}, by the structure theorem for finitely generated abelian groups, we may write
\begin{equation}\label{eq:aux}N(R)=\langle a_1\rangle\times\cdots\times\langle a_\ell\rangle,
\end{equation}
where ${\bf o}(a_i)=p^{n_i}$, $n_i\ge 1$ and, after relabeling the indices if necessary, we may assume ${\bf Z}(R)\le \langle a_1\rangle$. 
If $\ell=1$, then $N(R)$ is cyclic and hence every subgroup of $N(R)$ is characteristic. As $N(R)$ is characteristic in $R$, this implies that $H$ is a characteristic subgroup of $R$. Thus, by~\eqref{eq:2}, we have
$$\mathrm{PAut}(R)={\bf N}_{\mathrm{Aut}(R)}(H)=\mathrm{Aut}(R)$$
and hence $R$ is cyclic by Lemma~\ref{l:1}, contradicting our hypothesis on $R$. Therefore,
\begin{equation}\label{eq:6+}
\ell\ge 2.
\end{equation}

We claim that
\begin{equation}\label{eq:7}
p=2.
\end{equation}
Assume towards a contradiction that $p$ is odd. Set $y=a_\ell^{p^{n_\ell-1}}$. We have $y\in N(R)\le {\bf Z}_2(R)$ and $y\notin \langle a_1\rangle$, so $y\notin{\bf Z}(R)$. 
For every $x\in R$, since $[x,y]\in {\bf Z}(R)$ and $y^p=1$, we have $$[x,y]^{p}=[x,y^p]=[x,1]=1.$$  This yields  $|[R,y]|\le p$ and so $|[R,y]|=p$. The conditions of Lemma~\ref{prime-aut} are now satisfied, so with $t \in R\setminus\mathbf{C}_R(y)$ fixed, the map $\phi:R\to R$ defined by $\phi(t^ic)=(yt)^ic$ for every $c \in \mathbf{C}_R(y)$ and every $0 \le i <p$ is a group automorphism. 

Since $N(R)$ is abelian and $h \in N(R)$, we have $H\le N(R)\le {\bf C}_R(h)$ and hence $\phi$ fixes $H$ pointwise. Thus $\phi$ is a power automorphism. However, $h t=t^\phi\notin\langle t\rangle$, because $t$ does not commute with $h$. This contradiction establishes~\eqref{eq:7}.

We claim that
\begin{equation}\label{eq:8}
|{\bf Z}(R)|=2.
\end{equation}
We argue by contradiction and suppose that there exists $\bar c\in{\bf Z}(R)$ with ${\bf o}(\bar c)= 4$. Now, we mimic previous arguments, with a slight twist. Set $h=a_\ell^{2^{n_\ell-1}}$ and $y=\bar{c}h$. As before (with $\bar{c} \in \mathbf{Z}(R)$ not impacting these matters), $h$ (and therefore $y$) is not in $\mathbf{Z}(R)$, but is in $\mathbf{Z}_2(R)$. 
Let $t\in R\setminus {\bf C}_R(y)$. As $[t,y]\ne 1$, $|[R,y]|=2$ and $[R,y]\le{\bf Z}(R)$, we have $[t,y]=z$, where $z$ is the involution in the cyclic group ${\bf Z}(R)$. Thus $\bar c^2=z$, so $y^2=h^2\bar{c}^2=z$. 
So the conditions of Lemma~\ref{prime-aut} are met with $p=2$, and with $t \in R\setminus \mathbf{C}_R(y)$ fixed, the map $\phi:R\to R$ defined by $\phi(tc)=ytc$ for every $c \in \mathbf{C}_R(y)$ is a group automorphism. 
Since $N(R)$ is abelian, we have $H\le N(R)\le {\bf C}_R(y)$ and hence $\phi$ fixes $H$ pointwise. Thus $\phi$ is a power automorphism. However, $y t=t^\phi\notin\langle t\rangle$, because $t$ does not commute with $y$. This contradiction establishes~\eqref{eq:8}.

We claim that 
\begin{equation}\label{eq:9}
N(R) \hbox{ is an elementary abelian 2-group}.
\end{equation}
We use the direct product decomposition in~\eqref{eq:aux}. For every $i\in \{2,\ldots,\ell\}$, we have $[R,a_i]\le {\bf Z}(R)$, because $a_i\in N(R)\le {\bf Z}_2(R)$ by Theorem~\ref{thrm:schenkman}. By~\eqref{eq:8}, for every $x\in R$, we have $1=[x,a_i]^2=[x,a_i^2]$ and hence $a_i^2\in \langle a_i\rangle\cap {\bf Z}(R)\le \langle a_i\rangle\cap\langle a_1\rangle=1$. This implies $a_i^2=1$ and hence ${\bf o}(a_i)=2$. 
The same argument applied with $i=1$ gives ${\bf o}(a_1)\le 4$. Suppose that ${\bf o}(a_1)=4$. Then $|R:{\bf C}_R(a_1)|=|[R,a_1]|=2$ and $a_1^2=z$ is the involution of ${\bf Z}(R)$. Arguing as before, Lemma~\ref{prime-aut} with $y=a_1$ produces a non-power automorphism of $R$ fixing $H$ setwise, which is a contradiction. Therefore ${\bf o}(a_1)=2$ and this establishes~\eqref{eq:9}.

We claim that 
\begin{equation}\label{eq:10}
x^2\in N(R)\hbox{ and }{\bf o}(x)=4, \quad \forall x\in R\setminus N(R).
\end{equation}
From~\eqref{eq:9}, we have $y^2=1$ for every $y\in N(R)$. Let $h_1,\ldots,h_\kappa$ be generators of $H$. As $N(R)$ is abelian and $H \le N(R)={\bf N}_R(H)$, we have
$${\bf N}_R(H)={\bf C}_R(H)=\bigcap_{i=1}^\kappa {\bf C}_R(h_i).$$
As $[R,h_i]\le {\bf Z}(R)$ (again this follows from Theorem~\ref{thrm:schenkman} and $H \le N(R)$), we have $|R:{\bf C}_R(h_i)|=|[R,h_i]|\le 2$. Since $R/N(R)=R/\bigcap_i{\bf C}_R(h_i)$ embeds into $\prod_i (R/{\bf C}_R(h_i))$, we deduce that $R/N(R)$ is an elementary abelian $2$-group. Since $N(R)$ is also an elementary abelian $2$-group, each element of $R$ has order at most $4$.

Assume that there exists $t\in R\setminus N(R)$ with ${\bf o}(t)=2$. Let $K$ be any maximal subgroup of $R$ containing $N(R)$ and with $t\notin K$, and let $\chi:R\to {\bf Z}(R)$ be the surjective homomorphism with kernel $K$. Then Lemma~\ref{chi} produces a group automorphism $\phi_\chi:R\to R$, and since $H\le N(R)\le K$, $\phi_\chi$ is a power automorphism. However, $t^{\phi_\chi}=tz\notin \langle t\rangle$, as $z\notin\langle t\rangle$, where $z$ is the involution in ${\bf Z}(R)$. Since this is a contradiction, we have established~\eqref{eq:10}.

We are finally ready to finish the proof. Let $t\in R\setminus N(R)$. From~\eqref{eq:10}, $t$ has order $4$ and $t^2\in N(R)$. Assume towards a contradiction that $t^2=z$, where $z$ is the generator of ${\bf Z}(R)$. Since $t\notin N(R)={\bf C}_R(H)$, there exists $h\in H$ with $[h,t]=z$. Now, $th\notin N(R)$ and 
$$(th)^2=th\cdot th=t^2h[h,t]h=zhzh=z^2h^2=1,$$
contradicting that all elements in $R\setminus N(R)$ have order $4$. Thus $t^2\notin{\bf Z}(R)$. Now, let $K$ be any maximal subgroup of $R$ with $N(R)\le K$ and $t\notin K$ and let $\chi:R\to {\bf Z}(R)$ be the surjective group homomorphism with kernel $K$. Then Lemma~\ref{chi} produces a group automorphism $\phi_\chi:R\to R$, and since $H\le N(R)\le K$, it is a power automorphism. But $\phi_\chi$ is not a power automorphism because $t^{\phi_\chi}=tz\notin \langle t\rangle$, as $z\ne t^2$. This final contradiction concludes the proof.

\section{Future directions}

A natural next step for this work would be attempting to characterize groups $R$ such that the base size of the action of $\mathrm{Aut}(R)$ on $\mathrm{Sub}(R)$ is 2. We have made a variety of attempts to consider this. Computational results suggest that this family of groups is larger than one might intuitively anticipate, but our algorithms have been unable to consider large enough groups even to suggest a classification that we might then hope to prove. The methods we have used in this paper do have some relevance to the context of base size 2, but it seems that some additional ideas would be required.

If general results on base sizes greater than 1 seem beyond our reach, it would also be of interest to study the question of the base size of this action on some more specific families of groups. We believe that results of Burness and coauthors, for example~\cite{Burness1,Burness2}, might be useful for groups close to being simple. For instance, one can easily show that, for every non-abelian simple group $R$, the base size of $\mathrm{A}(R)$ is at most $3$. Here we do not use~\cite{Burness1,Burness2}; instead, to make the proof even shorter, we use a result about generation~\cite{Malle}. Indeed, when $R=\mathrm{PSU}_3(3)$ the result follows from a computer computation. Assume then that $R\ne \mathrm{PSU}_3(3)$. By~\cite[Theorem~A]{Malle}, $R$ is generated by three involutions $x_1,x_2,x_3$. It is now clear that
$${\bf N}_{\mathrm{Aut}(R)}(\langle x_1\rangle)\cap {\bf N}_{\mathrm{Aut}(R)}(\langle x_2\rangle)\cap{\bf N}_{\mathrm{Aut}(R)}(\langle x_3\rangle)
={\bf C}_{\mathrm{Aut}(R)}(\langle x_1,x_2,x_3\rangle)
={\bf C}_{\mathrm{Aut}(R)}(R)=1.$$
Hence $\mathrm{A}(R)$ has base size at most $3$.

In the case of abelian groups, the work of Babai, Goodman, and Lov\'{a}sz can be used to show that the base size of any abelian group is at most $5$. In~\cite[Lemma 6.1]{Babai1}, they define 4 subgroups of any abelian group and in Lemma 6.3 show (effectively) that if some group automorphism fixes each of these subgroups setwise and the element $x_1$ is also fixed, then every element of the group is fixed. If instead of fixing $x_1$ pointwise we fix $\langle x_1\rangle$ setwise (in addition to the other 4 subgroups), then essentially the same proof is enough to show that power automorphisms are possible, but nothing else. 

Of course, Babai's conjecture about lattice representations of groups (our original motivation for this work) remains open and of great interest. As noted previously, we have achieved some extensions of previous work on this (especially on groups of odd order) but the general problem remains challenging, particularly in the context of $2$-groups.

\thebibliography{10}
\bibitem{5}L.~Babai, \textit{Finite digraphs with given regular automorphism groups}, Period. Math. Hungar. \textbf{11} (1980), no. 4, 257--270.
\bibitem{4}L.~Babai, \textit{On the abstract group of automorphisms}, Combinatorics, Proceedings 8th British Combinatorics Conference, Swansea 1981, ed. H. N. V. Temperley; London Mathematical Society Lecture Notes 52 (University Press, Cambridge, 1981) 1--40.
\bibitem{Babai0}L.~Babai, A.~J.~Goodman, \textit{Subdirectly reducible groups and edge-minimal graphs with given automorphism group}, J.~London Math. Soc. \textbf{47} (1993), 417--432.
\bibitem{Babai1}L.~Babai, A.~J.~Goodman, L.~Lov\'{a}sz, \textit{Graphs with given automorphism group and few edge orbits}, European J. Combin. \textbf{12} (1991), 185--203. 
\bibitem{Baer1}R.~Baer, \textit{Der Kern, eine charakteristische Untergruppe}, Compositio Math. \textbf{1} (1935), 254--283.
\bibitem{birkhoff}G.~Birkhoff, \textit{Sobre los grupos de automorfismos}, Rev. Un. Mat. Argentina \textbf{11} (1946), 155--157.
\bibitem{Burness1}T.~C.~Burness, M.~Garonzi, A.~Lucchini, \textit{Finite groups, minimal bases and the intersection number}, Trans. Amer. Math. Soc. \textbf{9} (2022), 20--55.
\bibitem{Burness2}T.~C.~Burness, H.~Y.~Huang, \textit{On the intersection of Sylow subgroups in almost simple groups}, J.~Algebra \textbf{690} (2026), 596--631.
\bibitem{Dedekind}R.~Dedekind, \textit{\"{U}ber Gruppen, deren s\"{a}mmtliche Theiler Normaltheiler sind}, Mathematische Annalen \textbf{48} (1897), 548--561.
\bibitem{frucht}R.~Frucht, \textit{Herstellung von Graphen mit vorgegebener abstrakter Gruppe}, Compositio Math. \textbf{6} (1938), 239--250.
\bibitem{Goodman}A.~J.~Goodman, \textit{The edge-orbit conjecture of Babai}, J.~Combin. Theory Ser. B \textbf{57} (1993), 26--35.
\bibitem{Malle}G.~Malle, T.~Weigel, \textit{Generation of classical groups}, Geometriae Dedicata \textbf{49} (1995), 85--116.
\bibitem{Schenkman}E.~Schenkman, \textit{On the norm of a group}, Illinois J. Math. \textbf{4} (1960), 150--152.
\end{document}